\documentclass[a4paper,reqno]{amsart}
\usepackage{amssymb, amsthm, amsmath}
\usepackage{enumitem}
\usepackage{hyperref}
\usepackage{xparse}
\usepackage{csquotes}

\newtheorem{theorem}{Theorem}[section]

\theoremstyle{definition}

\newtheorem{remark}[theorem]{Remark}

\newcommand{\Addresses}{{
\footnotesize
\bigskip
\footnotesize
		
\textsc{Department of Mathematics and Statistics, Queen's University, Kingston, Ontario, K7L 3N6, Canada}\par\nopagebreak
\textit{E-mail address:}
\texttt{brad.w.rodgers@gmail.com}		
}}

\title[weighted Hilbert inequality]{On the optimal constant in the Montgomery-Vaughan weighted Hilbert inequality}
\author{Brad Rodgers}
\date{}

\begin{document}
	
\begin{abstract}
We give a proof that the optimal constant in the weighted Hilbert inequality is strictly greater than $\pi$, answering in the negative a question asked by Montgomery and Vaughan. The proof proceeds via an explicit limiting counterexample.
\end{abstract}

\maketitle
\thispagestyle{empty}

\section{Introduction}
\label{sec:intro}

The weighted version of Hilbert's inequality due to Montgomery and Vaughan \cite{MoVa74} states that there is an absolute constant $C$ such that for any $N\geq 2$, any distinct real numbers $\lambda_1,...,\lambda_N$, and any real or complex numbers $w_1,...,w_N$, if $\delta_n = \min_{m\neq n}|\lambda_m-\lambda_n|$ we have
\begin{equation}
\label{eq:infinite_Hilbert}
\Big| \sum_{\substack{1 \leq m,n \leq N \\ m \neq n}}  \frac{w_m \overline{w_n}}{\lambda_m - \lambda_n} \Big| \leq C \sum_{1\leq m \leq N} \frac{|w_m|^2}{\delta_m}.
\end{equation}
In \cite{MoVa74}, Montgomery and Vaughan proved this inequality with $C = 3\pi/2$. 

The classical Hilbert inequality is the special case $\lambda_m = m$ for all $m$. In this case, improving the estimate of Hilbert, Schur \cite{Sch} proved 
\[
\Big| \sum_{\substack{1 \leq m,n \leq N \\ m \neq n}}  \frac{w_m \overline{w_n}}{m - n} \Big| \leq \pi \sum_{1\leq m \leq N} |w_m|^2.
\]
Moreover, if $\pi$ were replaced by any smaller absolute constant this inequality would be false once $N$ grows sufficiently large. Montgomery and Vaughan also proved a variant of \eqref{eq:infinite_Hilbert} in which each $\delta_m$ is replaced by $\delta = \min \delta_m$. In that case as well the inequality can be proved with an absolute constant $\pi$ and again this is best possible.

There has therefore been some interest in obtaining the best possible absolute constant in the inequality \eqref{eq:infinite_Hilbert}. This is given as Problem 21 in the Appendix of \cite{Mo}. In \cite{Pr} Preissmann proved that $C$ may be taken to be $\pi \sqrt{1 + \frac{2}{3}\sqrt{\frac{6}{5}}} \approx 4.1325$, which appears to be the best known upper bound in the literature. Selberg claimed to Montgomery and Vaughan that a constant $C = 3.2$ could be obtained, but it appears there is no record of his argument. (He may have had the reduction in mind which later appeared as \cite[Thm. 1.1]{Ya}; Yangjit shows in that paper that this approach cannot possibly achieve a constant better than $C \approx 3.195$, though it remains an open problem in the literature whether the method can achieve this constant.) Other sources on this topic include \cite{CaLi,Li, MoVaaler99,PrLe}.

In light of the classical Hilbert inequality and related results, it is natural to wonder whether \eqref{eq:infinite_Hilbert} holds with $C = \pi$; already in \cite{MoVa74} this question is posed. The purpose of this note is to show that this is not the case.

\begin{theorem}
\label{thm:main}
If \eqref{eq:infinite_Hilbert} holds for every $N \geq 2$, and all admissible $\lambda$ and $w$, then the constant $C$ must be chosen such that $C > \pi$.
\end{theorem}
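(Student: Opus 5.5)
The plan is to build an explicit family of configurations $(\lambda^{(N)},w^{(N)})$ and show that
\[
R_N=\Big|\sum_{m\neq n}\frac{w_m\overline{w_n}}{\lambda_m-\lambda_n}\Big|\Big/\sum_m\frac{|w_m|^2}{\delta_m}
\]
satisfies $\liminf R_N>\pi$. This is the \enquote{limiting counterexample} of the abstract. Two preliminary remarks guide the choice:
\begin{itemize}[leftmargin=*]
\item For real $w$ the left side vanishes by antisymmetry, so the weights must be genuinely complex. The natural candidates are modulated weights $w_m=a_m e^{i\theta\lambda_m}$.
\item For the pure lattice $\lambda_m=m$, the bilinear form has symbol $i(\pi-\theta)$ on $(0,2\pi)$. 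The constant $\pi$ is only approached in the limit $\theta\to0^+$, with slowly varying amplitudes such as $a_m\approx m^{-1/2}$ on $[1,N]$. The deficit from $\pi$ then decays like a negative power of $\log N$.
\end{itemize}
So a pure lattice can never beat $\pi$. The idea is to break homogeneity so that the local quantity $\delta_m$ \emph{underrepresents} the true density seen by the kernel. This way the right side gets a smaller normalisation than the continuum heuristic suggests.

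\textbf{Step 1 (choice of configuration).} I would take two lattices with different spacings glued together: $\lambda_n=n$ for $-N\le n\le 0$, and $\lambda_n=hn$ for $1\le n\le M$ with a fixed $h\in(0,1)$. If this turns out to be too weak, I would instead take a lattice with a defect, for example one extra point inserted midway in one gap. The weights are $w_n=g(\lambda_n)\,\delta_n\,e^{i\theta\lambda_n}$, where $g$ is a near-extremal profile for the continuous Hilbert transform, such as $g(x)=|x|^{-1/2}$ truncated to $1\le|x|\le X$. The normalisation by $\delta_n$ is chosen so that the Riemann-sum heuristic gives $\sum_n |w_n|^2/\delta_n\approx\int|g|^2$. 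It also makes the bilinear form approximate $\mathrm{p.v.}\iint g(x)\overline{g(y)}e^{i\theta(x-y)}/(x-y)\,dx\,dy$, whose ratio to $\int|g|^2$ tends to $\pi$.

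\textbf{Step 2 (asymptotic expansion).} I would write the form as three pieces: the contribution within each homogeneous half, and the cross term between the halves. Within each half, the Schur/symbol computation is exact. It yields $(\pi-\theta h_j)$ times the $\ell^2$ mass, where $h_j$ is that half's spacing, minus an explicit boundary correction coming from the truncation at the interface. The cross term is compared with the continuum integral by Euler--Maclaurin, treating the kernel $1/(x-y)$ with $x<0<y$ as smooth away from the diagonal. This leaves an explicit interface correction of the same order as the truncation defects, namely relative size $1/\log X$ for the $|x|^{-1/2}$ profile.

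\textbf{Step 3 (sign of the correction) --- main obstacle.} The crux is showing that the net interface term is positive and strictly dominates both the $\theta$-loss and the truncation deficit, after letting $\theta\to0$ and $X\to\infty$ in the right order. I expect the bookkeeping of the $O(1/\log X)$ terms to be delicate. If the signs do not cooperate with the $|x|^{-1/2}$ profile, I would first try moving the mass concentration onto the interface. One way is to replace $g$ by $|x|^{-1/2}(\log|x|)^{-\beta}$. Another is to solve exactly the finite-dimensional eigenproblem on the few points adjacent to the defect, treating the two far lattices as Schur-extremal \enquote{reservoirs}. A strictly positive first-order gain there, compared against the known quadratic deficit of Schur's extremisers, then gives $\sup R_N>\pi$ and hence $C>\pi$.
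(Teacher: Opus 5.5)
Your proposal does not yet contain a proof. The one thing that must be shown is that some explicit configuration has ratio exceeding $\pi$. That is your Step 3, and it is left open with several unverified alternatives.

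Step 2 also overstates what is available. The identity $\sum_{k\neq 0}e^{ik\theta}/k=i(\pi-\theta)$ is exact only for pure exponentials on the full lattice. For truncated, amplitude-weighted data on half-lattices, the boundary corrections are exactly the unknown quantities, not explicit ones.

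More seriously, the trial family you fix cannot work. Take $g$ real with $g(x)=|x|^{-1/2}$ on $1\le |x|\le X$.
\begin{itemize}
\item One has $|\hat g(\xi)|^2\asymp |\xi|^{-1}$ for $X^{-1}\ll|\xi|\ll 1$, symmetrically in $\xi$, with total mass $\asymp\log X$.
\item Modulating by $e^{i\theta\lambda}$ leaves a proportion $\asymp \log(1/\theta)/\log X$ of the mass at negative frequencies. There the form has the opposite sign.
\item The lattice symbol costs a further relative amount $\asymp\theta$.
\item Optimizing in $\theta$ (take $\theta\asymp 1/\log X$) still leaves a deficit $\asymp \log\log X/\log X$.
\end{itemize}
By your own estimate, every interface and Euler--Maclaurin correction is only $O(1/\log X)$ relative. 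So no ordering of the limits $\theta\to 0$, $X\to\infty$ produces a ratio above $\pi$.

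The ``reservoir'' fallback is not well posed either. On the infinite lattice Schur's constant is not attained, so there are no extremisers to perturb. For finite sections, you have not quantified the deficit that a ``first-order gain'' would have to beat.

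The missing idea is to let the scale disparity degenerate completely and exhibit an exact eigenvector, instead of bookkeeping error terms.
\begin{itemize}
\item The paper takes a fine lattice $0,\varepsilon,\dots,M\varepsilon$ (so $\delta_j=\varepsilon$) plus a single isolated point $-1$ with $\delta_{-1}=1$. In your language this is effectively $h\to 0$ with the coarse side reduced to one point.
\item With $z_j=\sqrt{\varepsilon}\,u(j\varepsilon)$, it passes to the limit $\varepsilon\to 0$, $M\varepsilon\to\infty$.
\item This yields a bounded operator $W$ on $L^2[0,\infty)\oplus\mathbb{C}$: the half-line Hilbert transform coupled to the extra point through $1/(1+s)$. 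It satisfies $|\langle \psi, W\psi\rangle|\le C\|\psi\|^2$.
\item The Mellin-adapted vector $\psi_\alpha=(t^{i\alpha}/(1+t),\,-i\pi/\sinh(\pi\alpha))$ is an exact eigenvector with eigenvalue $-i\alpha_0$ precisely when $\alpha_0=\pi\coth(\pi\alpha_0)$. This equation has a root $\alpha_0>\pi$, giving $C\ge\alpha_0>\pi$.
\end{itemize}
The excess is $\alpha_0-\pi\approx 2\pi e^{-2\pi^2}\approx 1.7\times 10^{-8}$. It is exponentially small, which shows why a heuristic sign count of $O(1/\log X)$ corrections could not have certified $C>\pi$.
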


The short proof proceeds via a counterexample construction given in the next section. An explicit quantity $\alpha_0 > \pi$ is given such that we must have $C \geq \alpha_0$.

\subsection*{Acknowledgments}
I thank Bahman Gharesifard, Ofir Gorodetsky, and Jeff Lagarias for helpful feedback during the preparation of this manuscript. This research is supported in part by an NSERC Discovery Grant. Some of the work related to this manuscript was carried out during visits to Stanford University in 2024 and the Centre de recherches math\'ematiques (CRM) in Montreal during 2026 Universal Statistics in Number Theory thematic semester and I thank these institutions for their hospitality.

\subsection*{Use of large language models} 
ChatGPT 5.6 Sol has been used in this proof. The numerical experiments described in Remark \ref{rem:numerics} formed the start of this project. These began in 2024 but it was only recently that programs written by ChatGPT 5.6 Sol uncovered unexpected high-dimensional behavior. A further back and forth about these numerics led ultimately to the proof of Theorem 1.1 given in Section \ref{sec:geq_pi}. The proof that appears here is a modification and simplification of a proof suggested by ChatGPT. ChatGPT has also been used to proofread and format, but all writing in this manuscript was done by the author. 

\section{A proof that \texorpdfstring{$C>\pi$}{C > pi} and further commentary}
\label{sec:geq_pi}

\begin{proof}[Proof of Theorem \ref{thm:main}]
Suppose $C$ is a constant such that \eqref{eq:infinite_Hilbert} holds for all $N$, $\lambda$ and $w$. For notational convenience we will make a change of variable $w_m = z_m \sqrt{\delta_m}$ in \eqref{eq:infinite_Hilbert} and moreover relabel the range of summation from $\{1,...,N\}$ to $\{-1,...,M\}$, so that the inequality becomes
\begin{equation}
\label{eq:finite_Hilbert}
\Big| \sum_{\substack{-1 \leq j,k \leq M \\ j \neq k}}  \frac{z_j \overline{z_k} \sqrt{\delta_j \delta_k}}{\lambda_j - \lambda_k} \Big| \leq C \sum_{-1\leq j \leq M} |z_j|^2.
\end{equation}

We consider the case that $\lambda_{-1}, \lambda_0, \lambda_1, \lambda_2,..., \lambda_M$ are given by $-1, 0, \varepsilon, 2\varepsilon, ..., M\varepsilon$ for positive $\varepsilon = \varepsilon(M) \rightarrow 0$ in such a way that $M\varepsilon \rightarrow\infty$. Thus for $\varepsilon < 1$, we have $\delta_{-1} = 1$ and $\delta_j = \varepsilon$ for $j\geq 0$. Specialize to a vector
\[
z_{-1} = z, \quad z_j = \sqrt{\varepsilon} u(j \varepsilon)
\]
for $z \in \mathbb{C}$ and $u\colon \mathbb{R} \rightarrow \mathbb{C}$. We will give explicit $z$ and $u$ below such that the ratio of the sum on the left-hand side of \eqref{eq:finite_Hilbert} and the sum on the right-hand side will tend to a value greater than $\pi$, but it is easier to keep track of computations by using a Hilbert space formulation. 

Consider for the moment $u \in C_c^\infty(0,\infty)$ (with the convention that $u(0) = 0$ for such a $u$). If we suppose \eqref{eq:finite_Hilbert} then we have
\begin{multline*}
\left|\sum_{\substack{0 \leq j, k \leq M \\ j\neq k}} \frac{\sqrt{\varepsilon} u(j\varepsilon) \sqrt{\varepsilon} \overline{u(k\varepsilon)}}{\varepsilon j - \varepsilon k}
 \sqrt{\varepsilon \cdot \varepsilon} + \sum_{0 \leq k \leq M} \frac{z \sqrt{\varepsilon} \overline{u(k\varepsilon)}}{-1 - \varepsilon k} \sqrt{\varepsilon} + \sum_{0 \leq j \leq M} \frac{\sqrt{\varepsilon} u(j\varepsilon) \overline{z}}{\varepsilon j + 1} \sqrt{\varepsilon}\right|\\
 \leq C\Big( |z|^2 + \sum_{0 \leq j \leq M} |\sqrt{\varepsilon} u(j\varepsilon)|^2\Big).
\end{multline*}
Taking $M\rightarrow\infty$ yields\footnote{A careful justification for convergence of the double sum is given in Remark \ref{rem:double_conv} below.}
\begin{multline}
\label{eq:limiting_inequality}
\left|\int_0^\infty PV \int_0^\infty \frac{u(s)\overline{u(t)}}{s-t} \, dt ds - z \int_0^\infty \frac{\overline{u(t)}}{1+t}\, dt + \overline{z} \int_0^\infty \frac{u(s)}{1+s} \, ds\right| \\
 \leq C\Big( |z|^2 + \int_0^\infty |u(s)|^2\, ds \Big),
\end{multline}
where $PV$ indicates the inner integral is a principal value integral.

Now consider the Hilbert space $\mathcal{H} = L^2[0,\infty)\oplus \mathbb{C}$ with inner product
\[
\left\langle \begin{pmatrix} u_1 \\ z_1 \end{pmatrix}, \begin{pmatrix} u_2 \\ z_2 \end{pmatrix} \right\rangle = \int_0^\infty u_1(s) \overline{u_2(s)}\, ds + z_1 \overline{z_2}.
\]
Define the map
\[
W \begin{pmatrix} u \\ z \end{pmatrix} = \begin{pmatrix} v \\ -\int_0^\infty \frac{u(t)}{1+t}\, dt \end{pmatrix}, \quad \textrm{where}\; v(s) = Hu(s) + \frac{z}{1+s},
\]
and where $H\colon L^2[0,\infty) \rightarrow L^2[0,\infty)$ is the Hilbert transform, defined by $Hu(s) = PV \int_0^\infty \frac{u(t)}{s-t}\, dt$ for $u \in C_c^\infty(0,\infty)$. We have that $W$ is a bounded linear operator on $\mathcal{H}$ because the Hilbert transform is $L^2$-bounded and because $1/(1+s)$ is square integrable.

Thus by \eqref{eq:limiting_inequality}
\[
|\langle \psi, W\psi \rangle |\leq C \|\psi\|^2,
\]
for all $\psi$ in the subspace $C_c^\infty(0,\infty)\oplus \mathbb{C}$ of $\mathcal{H}$. By density and the boundedness of $W$ this inequality extends to all $\psi \in \mathcal{H}$.

In particular, one sees that if $W$ has an eigenvalue $\lambda$, then $|\lambda| \leq C$. But we will demonstrate that $W$ has an eigenvalue with modulus greater than $\pi$.

For $t > 0$, define
\[
u_\alpha(t) = \frac{t^{i\alpha}}{1+t},
\] 
for a real parameter $\alpha \neq 0$ to be chosen. Set $u_\alpha(0) = 0$. We have $u_\alpha \in L^2[0,\infty)$ and furthermore we have the calculus identities\footnote{The identity \eqref{eq:calc1} follows directly from an appropriate choice of parameters in the principal value integral \cite[6.2 (8), p.309]{Er}.
The identity \eqref{eq:calc2} follows as a limit of \cite[6.2 (7), p.309]{Er}.
The Fourier analytic approach outlined in Remark \ref{rem:fourier_ansatz} can also furnish an alternative proof.},
\begin{align}
\label{eq:calc1}
PV \int_0^\infty \frac{u_\alpha(t)}{s-t}\, dt &= \Big[- i \pi \frac{\cosh(\pi \alpha)}{\sinh(\pi \alpha)} \Big] u_\alpha(s) + \Big[ \frac{i\pi}{\sinh(\pi \alpha)} \Big]\frac{1}{1+s}, \\
\label{eq:calc2}
\int_0^\infty \frac{u_\alpha(t)}{1+t}\, dt &= \frac{\pi \alpha}{\sinh(\pi \alpha)}.
\end{align}
Set
\[
\psi_\alpha = \begin{pmatrix} u_\alpha \\ -i\pi/\sinh(\pi \alpha) \end{pmatrix}.
\]
We have chosen this value of $z$ in order to cancel the second term in the right-hand side of \eqref{eq:calc1} when $W$ is applied, and we have
\[
W\psi_\alpha = \begin{pmatrix} \Big[ -i \pi \frac{\cosh(\pi \alpha)}{\sinh(\pi \alpha)}\Big] u_\alpha \\ -\pi \alpha / \sinh(\pi \alpha) \end{pmatrix}.
\]

Note that $\psi_\alpha$ will be an eigenvector with eigenvalue $\lambda$ provided that $W \psi_\alpha = \lambda\psi_\alpha$. This happens when
\begin{align*}
-i \pi \frac{\cosh(\pi \alpha)}{\sinh(\pi \alpha)} &= \lambda \\
\alpha &= i\lambda,
\end{align*}
or in other words when
\begin{equation}
\label{eq:evalue_solved}
\alpha = \pi \frac{\cosh(\pi \alpha)}{\sinh(\pi \alpha)}.
\end{equation}
We will find a value of $\alpha_0 \in (\pi,\infty)$ which solves this. Consider
\[
F(\alpha) = \alpha - \pi \frac{\cosh(\pi \alpha)}{\sinh(\pi \alpha)}.
\]
Because $\cosh(x) > \sinh(x)$ for all real $x$, we have
\[
F(\pi) = \pi\Big(1 - \frac{\cosh(\pi^2)}{\sinh(\pi^2)}\Big) < 0
\]
and as $\alpha \rightarrow\infty$,
\[
F(\alpha) = \alpha - \pi + o(1) \rightarrow \infty.
\]
By continuity, $F$ has some zero $\alpha_0 \in (\pi, \infty)$.\footnote{Numerically $\alpha_0 \approx \pi+ 1.68 \times 10^{-8}.$ Note that this is a very crude lower bound for $C$; numerically it appears $C$ is larger still.}  Therefore \eqref{eq:evalue_solved} is satisfied for $\alpha = \alpha_0 > \pi$, and $W$ has an eigenvalue $\lambda = -i\alpha_0$ with $|\lambda| > \pi$.

Hence $\pi < |\lambda| \leq C$ as claimed.
\end{proof}

\begin{remark}
\label{rem:double_conv} 
We have promised a more careful justification of convergence to the double integral in \eqref{eq:limiting_inequality}.

Define
\[
U(s,t) = \begin{cases}
\big(u(s)\overline{u(t)} - u(t) \overline{u(s)}\big)/(s-t) & s\neq t \\
u'(s) \overline{u(s)} - u(s)\overline{u'(s)} & s=t.
\end{cases}
\]
One checks $U \in C_c((0,\infty)^2)$. By symmetrizing, one sees that for sufficiently large $M$, the discrete double sum above \eqref{eq:limiting_inequality} is 
\begin{multline*}
\varepsilon^2 \sum_{\substack{j,k\geq 0 \\ j\neq k}} \frac{u(j\varepsilon) \overline{u(k\varepsilon)}}{j\varepsilon - k\varepsilon} = \frac{1}{2} \varepsilon^2 \sum_{\substack{j,k\geq 0 \\ j\neq k}} \frac{u(j\varepsilon) \overline{u(k\varepsilon)} - u(k\varepsilon) \overline{u(j\varepsilon)}}{j\varepsilon - k\varepsilon}\\
 = \frac{1}{2}\varepsilon^2 \sum_{j,k\geq 0} U(j\varepsilon, k \varepsilon) + O(\varepsilon) \rightarrow \frac{1}{2}\int_0^\infty \int_0^\infty U(s,t)\, ds dt.
\end{multline*}
But using $u(t) = u(s) + O(|t-s|)$ and the compact support of $u$, we have as $\delta \rightarrow 0$,
\[
PV\int_0^\infty \frac{u(t)}{s-t}\, dt = \int_{|s-t|\geq \delta} \frac{u(t)}{s-t}\, dt + O(\delta),
\]
uniformly in $s$. Therefore (again using the compact support of $u$),
\begin{multline*}
\int_0^\infty PV \int_0^\infty \frac{u(s)\overline{u(t)}}{s-t} \, dt ds = \iint_{\substack{|s-t| \geq \delta \\ s,t > 0}}\frac{u(s)\overline{u(t)}}{s-t} \, dt ds + O(\delta) \\
= \frac{1}{2}\iint_{\substack{|s-t| \geq \delta \\ s,t > 0}} U(s,t)\, dtds + O(\delta) \rightarrow \frac{1}{2}\int_0^\infty \int_0^\infty U(s,t)\, ds dt,
\end{multline*}
as $\delta \rightarrow 0$. In the last line we have used symmetrization as before and the fact that $U$ is bounded.
\end{remark}

\begin{remark}
\label{rem:fourier_ansatz}
The ansatz $u_\alpha(t) = t^{i\alpha}/(1+t)$ can be motivated in the following way. In making it, we needed to solve
\[
PV \int_0^\infty \frac{u(t)}{s-t}\, dt + \frac{z}{1+s} = \lambda u(s),
\]
for some values of $\lambda, z$. Scale invariances suggest a change of variable $s = e^x$ and $t = e^y$. After rearrangement the above becomes
\[
PV \int_{-\infty}^\infty \frac{u(e^y) e^{y/2}}{\sinh((x-y)/2)} \, dy + \frac{z}{\cosh(x/2)}  = 2\lambda u(e^x) e^{x/2}.
\]
Setting $w(x) = u(e^x)e^{x/2}$ and formally taking Fourier transforms under the convention $\hat{w}(\xi) = \int w(x) e^{-i 2 \pi x\xi}\, dx$ we obtain\footnote{The Fourier transform formulas for the reciprocal hyperbolic trigonometric functions can be obtained from \cite[3.981.1 and 3.981.3, p. 509]{GrRy}}
\[
\hat{w}(\xi) \cdot \Big[-2\pi i\frac{\sinh(2\pi^2\xi)}{\cosh(2\pi^2 \xi)}\Big] + \frac{2\pi z}{\cosh(2\pi^2 \xi)} = 2 \lambda \hat{w}(\xi).
\]
Solving for $\hat{w}(\xi)$ we find it is the reciprocal of some linear combination of $\sinh(2\pi^2\xi)$ and $\cosh(2\pi^2 \xi)$. In light of the hyperbolic trigonometric addition formula, this suggests that $\hat{w}(\xi)$ is a scalar multiple of $1/\cosh(2\pi^2 (\xi-\beta))$ for some translation $\beta$. Taking the inverse Fourier transform and solving for $u(t)$ suggests it should be a scalar multiple of $t^{i2\pi \beta}/(1+t)$.

As we have noted, the calculus identity for the Hilbert transform of $u_\alpha$ can also be derived using Fourier analysis in this way.

This change of variables can also be equivalently viewed as exploiting a well-known relationship between the Hilbert transform and the Mellin transform (see e.g. \cite[Ch. 5.8]{Ki}).
\end{remark}

\begin{remark}
\label{rem:numerics}
For fixed $N\geq 2$, let $C_N$ be the infimum of all $C$ satisfying \eqref{eq:infinite_Hilbert} for all $\lambda_1,...,\lambda_N$ and $w_1,...,w_N$. It is an exercise to show that there must be some configurations $\lambda_1,...,\lambda_N$ which realize the constant $C_N$ for some input vectors $w$. The proof above began with a numerical study of such extremal configurations. 

For small $N$ (e.g. $N=5$) it can be shown by a somewhat tedious calculation that the extremal configurations of $\lambda$ are those which are equally spaced, e.g. $\lambda_1 = 1,...,\lambda_N = N$. But this pattern breaks down already for $N = 21$. In view of Theorem \ref{thm:main} and Schur's result this must happen for some $N$, but it came as a surprise to the author at first, before Theorem \ref{thm:main} was proved. It was an extrapolation to larger $N$ of extremal patterns for relatively small $N\geq 21$ which led to a first numerical proof of Theorem \ref{thm:main}. This first proof came from explicit $\lambda$ and $w$ for $N = 10000$ and a numerical certificate.

In light of the proof given above, that numerical certificate is perhaps of limited interest. But numerical investigations of extremal patterns of $\lambda$ nonetheless reveal some conjectural patterns which seem curious and suggest the value of the optimal constant $C$ may be around $3.143$. More information about conjectural extremal patterns as well as relevant Python programs are made available at \url{https://github.com/brad-rodgers/extremalconfigs}. These programs are for the purpose of numerical experiments; we caution that they have been generated by ChatGPT 5.6 Sol and have not been independently audited at this time.
\end{remark}

	\Addresses
	
\end{document}